\newcommand{\R}{\varmathbb{R}}
\newcommand{\Rn}{\varmathbb{R}^n}
\def\b{\qopname\relax o{b}}
\def\spt{\qopname\relax o{spt}}
\theoremstyle{plain}
\newtheorem{theorem}[equation]{Theorem}
\newtheorem{lemma}[equation]{Lemma}
\newtheorem{corollary}[equation]{Corollary}
\theoremstyle{definition}
\newtheorem{definition}[equation]{Definition}
\newtheorem{example}[equation]{Example}
\theoremstyle{remark}
\newtheorem{remark}[equation]{Remark}
\newtheorem{proof-CSTI}[equation]{Proof for the capacitary strong type inequality}
\newtheorem{proof-Riesz}[equation]{Proof for the equivalence of the capacities}
\numberwithin{equation}{section}
\date{\today}
\begin{document}

\title{On capacitary strong type inequalities for Orlicz-Sobolev functions}

\author{Ritva Hurri-Syrj\"anen}
\author{Jani Joensuu}
\address{Ritva Hurri-Syrj\"anen \newline University of Helsinki, Department of Mathematics and Statistics, 
Gustaf H\"allst\"omin katu 2 $\b$ , FI-00014 University of Helsinki, Finland}
\email{ritva.hurri-syrjanen@helsinki.fi}
\address{Jani Joensuu \newline Aalto University, Department of Mathematics and Systems Analysis, P. O. Box 11100, FI-00076 Aalto, Finland}
\email{jani.joensuu@aalto.fi}

\subjclass[2000]{31B15, 46E30, 46E35}

\keywords{Orlicz-Sobolev space, capacity, capacitary strong type inequality}

\begin{abstract}
We prove capacitary strong type inequalities for functions belonging to Orlicz-Sobolev spaces. 
As an application we consider capacitary averages and their limits.
\end{abstract}

\maketitle
\markboth{\textsc{On capacitary strong type inequalities}}
{\textsc{Ritva Hurri-Syrj\"anen and Jani Joensuu}}

\section{Introduction} \label{1}

Let $\Phi$ be a Young function with some restrictions and let
$\Psi$ be an increasing $C^1$-function with some restrictions, both defined on the positive real line.
We study the capacitary strong type inequalities
\begin{equation} \label{CSTI}
\int_{0}^{\infty} C_{\Phi} \bigg(\bigg\{x\in B^n(0,R): |u(x)|>t\bigg\}\bigg)
d \Psi(t) \leq K \int \Phi (|\nabla u(x)|) dx
\end{equation}
for all functions $u$ belonging to the Orlicz-Sobolev space 
$W_{0}^{1, \Phi}(B^n(0,R))$, $n\geq 2$; here
$K$ is a constant
independent of $u$.

When $\Psi (t)=t^p,$ $t>0,$ $p\geq 1,$
capacitary strong type inequalities 
have been studied thoroughly
by Vladimir Maz'ya in \cite[Sections 11.1--11.4]{Maz} and
by David R. Adams and Lars Inge Hedberg in 
\cite[Section 7]{AH}.

The cases when 
$\Phi (t)=t^n(\log (e+t))^{\alpha}$
and $\Psi (t)=t^n(\log (e+\frac{1}{t}))^{-\alpha},$  $t>0$, $0\le \alpha \le n-1$,
were considered by
Adams and the first author in \cite{AHS2} where the capacitary strong type inequality \eqref{CSTI} is proved for functions whose
derivatives belong to $L^n(\log (e+L))^{\alpha}$, 
\cite[Theorem 1.9]{AHS2}.
We generalize these results for a larger class of functions $\Phi$ and $\Psi$.
Our main theorem is 

\begin{theorem} \label{Main}
Let $\Phi$ be a Young function on $[0,\infty )$
and let $\Psi$ be an increasing $C^1$-function on $(0,\infty )$ such that
there are functions
$f$ and $\varphi :[0,\infty) \to [0,\infty)$ satisfying the doubling condition and
a function
$\psi:(0,\infty)\to(0,\infty)$   so that
$\Phi(t)=f(t) \varphi(t)$ and $\Psi(t)=f(t)\psi(t)$ and
the inequalities
\begin{equation} \label{EP}
f(s) \cdot f(t) \leq C f(st)
\end{equation}
and  
\begin{equation} \label{EP2}
\varphi(s) \cdot {\psi}(t)\leq C \varphi(st)
\end{equation}
hold for all $s,t  \in (0,\infty)$
with some fixed constant $C$.
Let $R>0$ be given.
Then, there is a constant $K$
such that the capacitary strong type inequality
\begin{equation} \label{CI}
\int_{0}^{\infty} C_{\Phi} (\{x\in B^n(0,R): |u(x)|>t\})d \Psi(t) \leq K \int \Phi (|\nabla u(x)|) dx
\end{equation}
holds for all functions $u$ belonging to the Orlicz-Sobolev space 
$W_{0}^{1, \Phi}(B^n(0,R))$, $n\geq 2$; here
$K$ is 
independent of $u$.
\end{theorem}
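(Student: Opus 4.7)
The plan is to adapt Maz'ya's classical dyadic truncation scheme to the Orlicz setting. The factorizations $\Phi=f\varphi$ and $\Psi=f\psi$, together with the multiplicative inequalities \eqref{EP}--\eqref{EP2}, serve as a substitute for the homogeneity used in the power-type case.

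As an initial reduction, I would use density and truncation to assume $u\in C_c^\infty(B^n(0,R))$ with $0\le u\le M<\infty$, so that only finitely many dyadic levels in the decomposition below contribute nontrivially. By monotonicity of capacity in the level $t$,
\[
\int_0^\infty C_\Phi(\{|u|>t\})\,d\Psi(t) \;\le\; \sum_{k\in\mathbb{Z}} C_\Phi(\{|u|>2^k\})\bigl(\Psi(2^{k+1})-\Psi(2^k)\bigr).
\]
To bound each capacity in the sum, I would test $C_\Phi(\{|u|>2^k\})$ with the normalized dyadic truncation
\[
v_k \;:=\; 2^{1-k}\min\!\bigl(\max(|u|-2^{k-1},0),\,2^{k-1}\bigr),
\]
which is admissible, vanishes off the annular set $A_k:=\{2^{k-1}<|u|<2^k\}$, and satisfies $|\nabla v_k|=|\nabla u|/2^{k-1}$ on $A_k$. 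This yields
\[
C_\Phi(\{|u|>2^k\}) \;\le\; \int_{A_k}\Phi\!\bigl(|\nabla u|/2^{k-1}\bigr)\,dx.
\]

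The heart of the argument is the pointwise scaling estimate
\[
\bigl(\Psi(2^{k+1})-\Psi(2^k)\bigr)\,\Phi\!\bigl(s/2^{k-1}\bigr) \;\le\; C\,\Phi(s)\qquad\text{for all }s>0.
\]
To derive this, I would bound $\Psi(2^{k+1})-\Psi(2^k)\le\Psi(2^{k+1})$ and write the product as
\[
\bigl[f(2^{k+1})\,f(s/2^{k-1})\bigr]\,\bigl[\psi(2^{k+1})\,\varphi(s/2^{k-1})\bigr].
\]
Condition \eqref{EP} bounds the first bracket by $C\,f(4s)$, condition \eqref{EP2} bounds the second by $C\,\varphi(4s)$, and the factors of $4$ are absorbed by the doubling of $f$ and $\varphi$. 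Substituting $s=|\nabla u(x)|$, summing over $k\in\mathbb{Z}$, and using that the annuli $\{A_k\}$ are pairwise disjoint, the right-hand side telescopes to $C\int\Phi(|\nabla u|)\,dx$, which is the claimed inequality.

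The main obstacle I anticipate is the bookkeeping in the pointwise scaling estimate: the arguments fed into \eqref{EP} and \eqref{EP2} must be chosen so that the two products $f(a)f(b)$ and $\psi(a)\varphi(b)$ share the \emph{same} arguments $a,b$ whose product is a bounded multiple of $s$. The normalization $2^{k-1}$ in the truncation and the choice of $\Psi(2^{k+1})$ as the upper bound for the increment are tuned precisely for this to work. A secondary technicality is justifying the initial reduction to bounded, compactly supported $u$, which is standard provided $C_\Phi$ is outer regular and the Orlicz integral is continuous under truncation in the natural norm.
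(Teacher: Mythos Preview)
Your proposal is correct and is essentially identical to the paper's proof: the paper uses the same dyadic decomposition, the same truncation (written as $H(u/2^k)$ with $H$ piecewise linear, which coincides with your $v_k$), the same bound $\Psi(2^{k+1})-\Psi(2^k)\le\Psi(2^{k+1})$, and the same application of \eqref{EP}--\eqref{EP2} followed by doubling to absorb the factor $4$. One small slip: it is $\nabla v_k$, not $v_k$ itself, that vanishes off $A_k$ (indeed $v_k\equiv1$ on $\{|u|\ge 2^k\}$), but your subsequent use is correct.
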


\begin{remark}
If $\Phi(t)=f (t)\varphi(t)$ is a Young function, it is not necessary that both $f$ and $\varphi$ are Young functions in Theorem \ref{Main}.
As an example, consider $\Phi(t)=t^{p}\log(e+t)$ , $1<p<2$.
\end{remark}

There are lots of functions $\Phi$ and $\Psi$ for which this capacitary strong type inequality
\eqref{CI} is valid with functions from the Orlicz-Sobolev space $W_{0}^{1, \Phi}(B^n(0,R))$.
We give concrete examples of functions $\Phi$ and $\Psi$ in     
Example \ref{example:concrete} and more general examples in 
Example \ref{example:varphi} and Corollary \ref{Savu}.
As an application we prove results for limits of capacitary averages, Theorem \ref{Application}.

\section{Preliminaries}
Let us recall that
a continuous, strictly increasing convex function $\Phi :[0,\infty )\to [0,\infty )$ is 
called a Young function, if
\begin{equation*}
\lim_{t\to 0+} \frac{\Phi (t)}{t}=\lim_{t\to\infty} \frac{t}{\Phi(t)}=0.
\end{equation*}
It is well known that
\begin{equation*}
\Phi (t)=\int_0^t\phi (s)\,ds
\end{equation*}
with some non-decreasing, right-continuous function 
$\phi $ on $[0,\infty )$.
The so called
$\Delta _2$-condition, which means that for a Young function $\Phi$ there is a positive constant $C$ such that  
\begin{equation}\label{doubling}
\Phi (2t)\le C\Phi (t) 
\end{equation}
for all  $t\in [0,\infty )\,,$
is a useful property.
The condition \eqref{doubling} is called  the doubling condition if the function $\Phi$ is not a Young function. 

Let $G$ be a domain in $\Rn$, $n\geq 2$, that is open and connected.
Let us write
$\overline{\R}=\R\cup\{\infty\}\cup\{-\infty\}$
and
\begin{equation*}
\mathcal{M}_0(G)
=\big\{u: G\to\overline{\R} \big\vert\quad u  \mbox{ is measurable } \big\}\,.
\end{equation*}
The Orlicz class
\begin{equation*}
L_{\Phi}(G)=\bigg\{u\in \mathcal{M}_0(G) \bigg\vert\int_{G}\Phi (\vert u(x)
\vert )\,dx <\infty\bigg\}
\end{equation*}
is not necessarily a linear space.
Hence, the Orlicz space is defined as
\begin{equation*}
L^{\Phi}(G)=\bigg\{u\in \mathcal{M}_0(G) \big\vert
\int_{G}\Phi (\vert \lambda u(x)\vert )\,dx <\infty \mbox{ with some } \lambda >0 \bigg\}.
\end{equation*}
If $\Phi$ satisfies the $\Delta_2$-condition, then in fact
$L^{\Phi}(G)=L_{\Phi}(G)$.
The Orlicz space equipped with the Luxemburg norm,
\begin{equation*}
\vert\vert u\vert\vert _{L^{\Phi}(G)}=
\inf\biggl\{s \bigg\vert
\int_{G}\Phi \bigg(\big\vert \frac{u(x)}{s}\big\vert\bigg)\,dx \le 1\biggr\}\,,
\end{equation*}
is a Banach space. 

The Orlicz-Sobolev space $W^{1,\Phi}(G)$ is defined as
\begin{equation*}
W^{1,\Phi}(G)=\bigg\{u\in L^{\Phi}(G)\big\vert \mbox{ the first weak derivative }
Du \in L^{\Phi}(G)    \bigg\}.
\end{equation*}
It is a Banach space with the norm
\begin{equation*}
\vert\vert u\vert\vert _{W^{1,\Phi}(G)}=
\vert\vert u\vert\vert _{L^{\Phi}(G)}+\vert\vert Du\vert\vert _{L^{\Phi}(G)}\,.
\end{equation*}
 The space $W_{0}^{1,\Phi}(G)$ is the closure of $C_{0}^{\infty}(G)$ with respect to the norm
$\vert\vert \cdot \vert\vert _{W^{1,\Phi}(G)}$.
The functions from the Orlicz, and Orlicz-Sobolev, spaces are called Orlicz, and Orlicz-Sobolev, functions
respectively.

If a Young function $\Phi$ satisfies the $\Delta _2$-condition, then
the space 
$W^{1,\Phi}(G)$ is separable
and also $C^{\infty}_0(\Rn )$ is dense in $W^{1,\Phi}(\Rn)$.
We refer the reader to \cite[Chapter 8]{AF}, \cite[Chapter 3]{KJF} and \cite{RR} for more information about Orlicz spaces and
to \cite[Chapter 7]{KJF} for information about Orlicz-Sobolev spaces.

Throughout the paper, the letters $C$ and $K$ will denote various constants which may differ from one formula to the next. 
The notation $\sim$ means 'is comparable to', that is the ratio of the two quantities is bounded above and below by finite positive constants.
The balls $B^n(0,R)$, $R>0$, are in $\R^n$, $n\geq 2$.

\vspace{0.5cm}

The following
${\Delta}_{2}^{+}$-condition introduced in \cite[Section 4]{J3} is useful in applications.

\subsection*{The ${\Delta}_{2}^{+}$-condition}
Let $1<p<\infty.$
Let $\varphi$ be a positive, increasing and differentiable function on $[0,\infty )$ such that 
\begin{equation*} 
\varphi(t^{2})\sim \varphi(t) \quad \textrm{on}~(0,\infty)
\end{equation*}
and
\begin{equation*} 
\lim_{t\to\infty} \frac{t{\varphi}'(t)}{\varphi(t)}=0.
\end{equation*}
If there are positive constants $C$ and $K$ such that 
\begin{equation*} 
\frac{t{\varphi}'(t)}{\varphi(t)} \leq C <p  \quad\mbox{ for all } t\in(0,\infty)
\end{equation*}
and  
\begin{equation*} 
{\varphi}'(t) \leq K \quad\mbox{ for all } t\in(0,\infty),
\end{equation*}
and if the function $t\mapsto t^{p} \varphi (t)$ is a Young function 
on $[0,\infty )$ with a fixed $p\in (1,\infty)$,
then the function $\Phi(t)=t^{p} \varphi (t),$ $t\geq 0$, 
satisfies the ${\Delta}_{2}^{+}$-condition.

\begin{example}
The function $\Phi$,
\begin{displaymath}
\Phi(t)=t^{p} \left(\log(C+t)\right)^{\theta} \exp\left( [\log\log(C+t)]^{\gamma}\right),\quad t\geq 0
\end{displaymath}
satisfies
the ${\Delta}_{2}^{+}$-condition
when $p\in(1,\infty)$, $\theta \in[0,p-1]$, $\gamma \in [0,1)$, and
$C\geq e^{e}$ is a positive constant depending on $p$, $\theta$ and $\gamma$ only.
We refer the reader to \cite[Example 18]{J3}.
\end{example}

\section{The ${C}_{\Phi}$-capacity}

We recall the definition of the ${C}_{\Phi}$-capacity when 
$\Phi$ is a Young function with the $\Delta_2$-condition.
\begin{definition} \label{capsu} 
Let $R>0$ be given.  
Let $\Phi$ be a Young function such that $\Phi$ satisfies the $\Delta_{2}$-condition. 
For any set $E$ in $B^n(0,R)$ the ${C}_{\Phi}$-capacity of $E$ is defined by
\begin{equation*}
{C}_{\Phi} (E)=\inf \biggl\{\int {\Phi}(|\nabla u(x)|)dx : u\in C_{0}^{\infty}(B^n(0,R)), ~u(x)\geq 1~ \textrm{when } x\in E \biggr\}.
\end{equation*} 
\end{definition} 
A given property holds for almost every $x_0\in\Rn$ 
( a. e. $x_0\in\Rn$ ) in the ${C}_{\Phi}$-capacity 
sense if it holds outside a set of zero 
${C}_{\Phi}$-capacity; this is written as
${C}_{\Phi}$ -a.e. $x_0\in\Rn$.
The set 
$$\{u\in C_{0}^{\infty}(B^n(0,R)) : u (x)\geq 1, x\in E\}$$ 
can be replaced by a larger set 
$$\{u\in W_{0}^{1,\Phi}(B^{n}(0,R)) : u(x)\geq 1~ \textrm{for}~{C}_{\Phi} -\textrm{a.e. on}~ E\}.$$
That is, if $E$ in $B^{n}(0,R)$ is compact and
\begin{equation*}
\mathcal{K}=\inf \biggl\{\int {\Phi}(|\nabla u(x)|)dx : u\in W_{0}^{1,\Phi}(B^{n}(0,R)), ~u(x)\geq 1~ \textrm{for}~{C}_{\Phi} -\textrm{a.e. on}~ E \biggr\},
\end{equation*}
then $C_{\Phi}(E)=\mathcal{K}$.

For the calculations it is easier to use the Riesz capacity. We recall its definition.
Here, $I_{1}(y)=|y|^{1-n}$ is a Riesz kernel. 
Let $E$ be a set in $B^{n}(0,R)$. If the function $\Phi$ satisfies the ${\Delta}_{2}^{+}$-condition, 
\begin{equation*}
{R}_{\Phi} (E)=\inf \biggl\{\int {\Phi}(f(x))dx : f(x)\geq 0, \textrm{spt} f \subset B^{n}(0,R), ~I_{1}*f(x)\geq 1,~ x\in E \biggr\}\,,
\end{equation*} 
the support of $f$ is written as $\spt f$. 
Other papers that explore or use similar capacities are for example
\cite{AC}, \cite{AHS1}, \cite{CS}, \cite{FMOS}, \cite{J1}, \cite{J2}, \cite{J3}, \cite{M}, and   \cite{MS}.

\begin{lemma}\label{capacity_Riesz}
If the function $\Phi$ satisfies the ${\Delta}_{2}^{+}$-condition,
then
$R_{\Phi}(E)\sim C_{\Phi}(E)$
for all compact sets $E$ in $B^{n}(0,R)$.
\end{lemma}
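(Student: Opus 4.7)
The plan is to prove the two inequalities $R_\Phi(E) \lesssim C_\Phi(E)$ and $C_\Phi(E) \lesssim R_\Phi(E)$ separately, adapting to the Orlicz setting the classical $L^p$-argument of Maz'ya \cite{Maz} and Adams-Hedberg \cite{AH}. The doubling of $\Phi$ alone suffices for one direction, while the $\Delta_{2}^{+}$-condition is essential for the other.

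For $R_\Phi(E) \leq C\, C_\Phi(E)$, I would take $u \in C_{0}^{\infty}(B^n(0,R))$ with $u \geq 1$ on $E$ and invoke the classical pointwise representation $|u(x)| \leq C_n\, I_1(|\nabla u|)(x)$ valid for all compactly supported smooth functions on $\Rn$. Then $f := C_n |\nabla u|$ has support in $B^n(0,R)$ and satisfies $I_1 * f(x) \geq u(x) \geq 1$ on $E$, so it is admissible for $R_\Phi(E)$. Using the $\Delta_2$-condition on $\Phi$,
\[
R_\Phi(E) \leq \int \Phi(C_n |\nabla u|) \, dx \leq C \int \Phi(|\nabla u|) \, dx,
\]
and taking the infimum over $u$ closes this direction.

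For $C_\Phi(E) \leq C\, R_\Phi(E)$, I would start with admissible $f \geq 0$ for $R_\Phi(E)$ and set $v := I_1 * f$; then $v \geq 1$ on the compact set $E$. A cutoff-and-truncation procedure, combined with the extended admissibility (with competitors in $W_{0}^{1,\Phi}$ rather than $C_{0}^{\infty}$) recorded immediately after Definition \ref{capsu}, produces a valid competitor for $C_\Phi(E)$. The heart of the argument is the modular inequality
\[
\int \Phi(|\nabla v|) \, dx \leq C \int \Phi(f) \, dx,
\]
which follows because $\nabla I_1$ has a Calderón--Zygmund type kernel: under the $\Delta_{2}^{+}$-condition the corresponding singular integral operator, together with the Hardy--Littlewood maximal operator, is bounded on $L^{\Phi}$ in modular form.

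The principal obstacle is precisely this last modular Orlicz estimate. The classical $L^p$-boundedness of singular integrals does not automatically upgrade to a modular Orlicz inequality, and the three structural conditions $\varphi(t^{2}) \sim \varphi(t)$, $t\varphi'(t)/\varphi(t) < p$ and $\varphi'$ bounded that define the $\Delta_{2}^{+}$-class are tailored exactly so that the upgrade works with a constant independent of $f$. A secondary technical point is the passage from competitors in a slightly enlarged ball (arising naturally from the cutoff supporting $v$) back to $B^n(0,R)$, handled routinely by dilation and localization; this adjusts the final constant $K$ but not the structure of the estimate.
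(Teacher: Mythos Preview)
Your proposal is correct and follows essentially the same route as the paper: the paper's own proof invokes Lemma~\ref{R} (the Kokilashvili--Krbec modular bound for Riesz transforms under the $\Delta_2^+$-condition) to obtain $\int\Phi(|\nabla(I_1*f)|)\,dx=\int\Phi(|R*f|)\,dx\le C\int\Phi(f)\,dx$, which is exactly the singular-integral step you identify as the ``principal obstacle,'' and the other direction rests on the same pointwise representation $|u|\le C_n\,I_1*|\nabla u|$ that you spell out. Your version is in fact more careful than the paper's about the cutoff/truncation needed to make $I_1*f$ an admissible competitor in $W_0^{1,\Phi}(B^n(0,R))$, a point the paper leaves implicit.
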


For the proof we need Lemma \ref{R}.
Let
$R=(R_1\,,\dots\,, R_n)$ 
be a Riesz transformation.
Suppose that there is a constant $K$ depending on $n$ only such that
\begin{equation}\label{Riesz}
R_{j} * u(x)=K \int \frac{x_{j}-y_{j}}{|x-y|^{n+1}} u(y) dy, \quad j=1,\ldots,n,
\end{equation}
with a suitable function $u$.

\begin{lemma} \cite[Theorem 1.4.3]{KK}\label{R}
If the function $\Phi$ satisfies the ${\Delta}_{2}^{+}$-condition, $\int \Phi(u(x))dx <\infty$
with a function $u$ from a suitable space, and
\eqref{Riesz} holds, then there exists a constant $C$ such that
\begin{equation*}
\int \Phi(R_ {j}*u(x))dx\leq C\int \Phi(u(x))dx, \quad j=1,\ldots,n.
\end{equation*}
\end{lemma}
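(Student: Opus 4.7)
The plan is to prove the two inequalities $R_\Phi(E) \leq C \cdot C_\Phi(E)$ and $C_\Phi(E) \leq C \cdot R_\Phi(E)$ separately, with constants independent of the compact set $E \subset B^n(0,R)$.

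For $R_\Phi(E) \leq C \cdot C_\Phi(E)$, I would take any admissible $u \in C_0^\infty(B^n(0,R))$ with $u \geq 1$ on $E$ and use the standard pointwise representation $u(x) \leq c_n \, I_1 * |\nabla u|(x)$, obtained by integrating $\nabla u$ along rays from infinity through $x$. Setting $f := c_n |\nabla u|$ produces a nonnegative function supported in $B^n(0,R)$ with $I_1 * f \geq 1$ on $E$, hence admissible for $R_\Phi(E)$. The doubling of $\Phi$ then yields $\int \Phi(f) \leq C' \int \Phi(|\nabla u|)$, and taking the infimum over $u$ completes this direction.

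For the reverse inequality, I would take $f \geq 0$ admissible for $R_\Phi(E)$ and set $u := I_1 * f$, so $u \geq 1$ on $E$. Differentiating the Riesz kernel $|y|^{1-n}$ gives
\[
\partial_j u(x) = (1-n)\int \frac{x_j - y_j}{|x-y|^{n+1}} f(y)\,dy,
\]
which, up to the constant in \eqref{Riesz}, equals $R_j * f$. Applying Lemma \ref{R} coordinatewise and using doubling of $\Phi$ gives $\int \Phi(|\nabla u|) \leq C'' \int \Phi(f)$. To produce an admissible test function for $C_\Phi(E)$, first truncate by $v := \min(u,1)$, which still satisfies $v \equiv 1$ on $E$ and $|\nabla v| \leq |\nabla u|$ a.e., then multiply by a smooth cutoff $\eta \in C_0^\infty(B^n(0,R))$ with $\eta \equiv 1$ in a neighborhood of $E$ so that $w := \eta v$ belongs to $W_0^{1,\Phi}(B^n(0,R))$ with $w \equiv 1$ on $E$.

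The main obstacle is controlling the additional energy introduced by the cutoff, i.e.\ bounding $\int \Phi(v|\nabla \eta|)$ uniformly by a multiple of $\int \Phi(f)$. One route is to choose $\eta$ adapted to $\dist(E,\partial B^n(0,R))$ and exploit the decay of $v \leq 1$ outside $\spt f$ together with the convexity and doubling of $\Phi$ to absorb the error. Alternatively, I would invoke the larger admissible class of $W_0^{1,\Phi}(B^n(0,R))$-functions with $u \geq 1$ $C_\Phi$-a.e.\ on $E$, as noted after Definition \ref{capsu}, and perform a density/truncation argument so that no cutoff gradient term arises. Combining the resulting cutoff estimate with $\int \Phi(|\nabla v|) \leq C'' \int \Phi(f)$ and taking the infimum over $f$ yields $C_\Phi(E) \leq C \cdot R_\Phi(E)$, completing the equivalence.
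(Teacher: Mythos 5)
Your proposal does not address the statement at hand. Lemma \ref{R} is the modular boundedness of the Riesz transforms on the Orlicz class,
\begin{equation*}
\int \Phi(R_{j}*u(x))\,dx \leq C \int \Phi(u(x))\,dx, \quad j=1,\ldots,n,
\end{equation*}
under the ${\Delta}_{2}^{+}$-condition. What you have written is instead a proof sketch of Lemma \ref{capacity_Riesz}, the equivalence $R_{\Phi}(E)\sim C_{\Phi}(E)$ for compact $E\subset B^{n}(0,R)$ --- a different result, and in fact one whose proof in the paper \emph{uses} Lemma \ref{R} as its key ingredient. Your sketch even invokes ``Lemma \ref{R} coordinatewise'' in the middle of the argument, so read as a proof of Lemma \ref{R} itself it is circular: you assume exactly the inequality you are asked to establish.

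For the record, the paper gives no proof of Lemma \ref{R}; it is quoted from Kokilashvili--Krbec \cite[Theorem 1.4.3]{KK}. A genuine proof would be a singular-integral argument, entirely absent from your text: the Riesz transforms are Calder\'on--Zygmund operators, bounded on $L^{p}(\Rn)$ for all $1<p<\infty$ and of weak type $(1,1)$; the ${\Delta}_{2}^{+}$-condition, writing $\Phi(t)=t^{p}\varphi(t)$ with $0\leq t\varphi'(t)/\varphi(t)\leq C<p$, pins the growth indices of $\Phi$ strictly between $1$ and $\infty$ and guarantees the $\Delta_{2}$-condition, so that the $L^{p}$-scale boundedness transfers to the modular inequality on $L^{\Phi}$ by interpolation-type (e.g.\ distributional or rearrangement) estimates. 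Separately, note that your sketch of the capacity equivalence is itself reasonable --- the direction $R_{\Phi}(E)\leq C\,C_{\Phi}(E)$ via the pointwise bound $u(x)\leq c_{n}\,I_{1}*|\nabla u|(x)$ is the standard argument, and arguably cleaner than the paper's terse version --- but for the converse direction the paper simply uses $I_{1}*f$ together with the enlarged admissible class of $W_{0}^{1,\Phi}$-functions, so the cutoff-energy difficulty you flag does not arise there; in any case none of this proves the stated lemma.
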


\subsection{Proof for the equivalence of the capacities}

\begin{proof}
Let $u\in C_{0}^{\infty}(B^n(0,R))$, with $u(x)\geq 1$ on $E$. Then
\begin{equation*}
1\leq |u(x)|\leq K C_{\Phi}(E)
\end{equation*}
with some constant $K$.
On the other hand, if $I_ {1}*f(x)\geq 1$ on $E$, then by Lemma \ref{R} 
for a Riesz transformation 
$R=(R_{1},\ldots ,R_{n})$ there is a constant $C$ such that
\begin{equation*}
\int \Phi(|\nabla(I_{1}*f)|)dx=\int \Phi(|\nabla(R*f)|)dx \leq C\int \Phi(f)dx\,.
\end{equation*}
Hence $I_ {1}*f$ is a test function for $C_{\Phi}$. Thus, 
\begin{equation*}
C_{\Phi}(E)\leq CR_{\Phi}(E)
\end{equation*} 
with a constant $C$,
\end{proof}

The equivalence of the capacities in Lemma \ref{capacity_Riesz} can be extended to all Suslin sets $A$
in $B^{n}(0,R)$, since
$C_{\Phi}(A)=\sup_{E\subset A} C_{\Phi}(E)$, where $E$ is compact and $R_{\Phi}(A)=\sup_{E\subset A} R_{\Phi}(E)$, where $E$ is compact.

\section{The capacitary strong-type inequality: proof and examples}

We will prove the capacitary strong type inequality 
\eqref{CSTI} for a large class of functions
$\Phi$ and $\Psi$ with Orlicz-Sobolev functions from $W^{1,\Phi}_0(B^n(0,R))$
as formulated in 
Theorem \ref{Main}.
The proof is an extension of the proof of \cite[Theorem 1.9]{AHS2}.

\subsection{Proof for the capacitary strong type inequality in Theorem \ref{Main}}
\begin{proof}
We may assume that
$u\in {C}_{0}^{\infty}(B^{n}(0,R))$.
Since the function $t \mapsto \Psi (t)$ is a $C^1$-function in $(0, \infty)$, 
the Riemann-Stieltjes integral becomes the Riemann integral.
Since $\Psi$ is increasing in $(0, \infty)$, we obtain 
\begin{eqnarray*}
\int_{0}^{\infty} C_{\Phi}(\{x: u(x)>t\})d\Psi(t) &=& \sum_{k=-\infty}^{\infty} \int_{2^{k}}^{2^{k+1}} C_{\Phi}(\{x: u(x)>t\})d\Psi(t) \\
&&\\
&\leq& \sum_{k=-\infty}^{\infty} C_{\Phi}(\{x: u(x)>2^{k}\}) \int_{2^{k}}^{2^{k+1}} \Psi '(t) dt \\
&& \\
&\leq& \sum_{k=-\infty}^{\infty} C_{\Phi}(\{x: u(x)>2^{k}\})(\Psi(2^{k+1})- \Psi(2^{k})).
\end{eqnarray*}
Let $H:(-\infty, \infty) \to \R$ be the function
\begin{equation*}
H(t) = \left\{ \begin{array}{ll}
0, & \textrm{if $t\leq 1/2$}, \\
2t-1, & \textrm{if $1/2 \leq t\leq 1$}, \\
1, & \textrm{if $t\geq 1$}.
\end{array} \right.
\end{equation*}
Then, $0 \leq H'(t) \leq 2$ for all $t\in(0,\infty)$. 
On the set 
\begin{equation*}
\big\{x:\frac{u(x)}{2^{k}} >1\big\}
\end{equation*}
$H(\frac{u(x)}{2^k})=1$.
Let us write
\begin{equation*}
{E}_k=\big\{x: \frac{1}{2}< \frac{u(x)}{2^{k}} \le 1\big\}.
\end{equation*}
Since the function $H(\frac{u(x)}{2^{k}})$ is a test function for 
the $C_{\Phi}$-capacity of the set
$\big\{x: \frac{u(x)}{2^{k}}>1\big\}$, that is
$C_{\Phi}(\big\{x: \frac{u(x)}{2^{k}}>1\big\})$, and $H'(\frac{u(x)}{ 2^{k}})=0$ outside ${E}_k$, 
and $\Phi(0)=0$,
we obtain
\begin{eqnarray*}
   && \int_{0}^{\infty} C_{\Phi}(\{x: u(x)>t\})d\Psi(t) \\
 &\leq& \sum_{k=-\infty}^{\infty} C_{\Phi}(\{x: u(x)>2^{k}\})(\Psi(2^{k+1})- \Psi(2^{k})) \\
  &\leq& \sum_{k=-\infty}^{\infty} (\Psi(2^{k+1})- \Psi(2^{k}))\int \Phi\left(|\nabla H(u(x)/2^{k})|\right) dx.  
\end{eqnarray*}
Further,
\begin{equation*}
\nabla H(u(x)/2^{k}) =H'(u(x)/2^{k}) \cdot \nabla u(x) /2^{k},
\end{equation*}
and $0\leq H'(t)\leq 2$, and $\Phi$ is increasing. Hence
\begin{eqnarray*}
  && \sum_{k=-\infty}^{\infty} (\Psi(2^{k+1})- \Psi(2^{k}))\int \Phi\left(|\nabla H(u(x)/2^{k})|\right) dx  \\
  &=& \sum_{k=-\infty}^{\infty}(\Psi(2^{k+1})- \Psi(2^{k})) \int_{{E}_k} \Phi \left(H'(u(x)/2^{k}) |\nabla u(x)|/2^{k} \right) dx  \\
  &\leq& \sum_{k=-\infty}^{\infty}(\Psi(2^{k+1})- \Psi(2^{k})) \int_{{E}_k} \Phi \left(|\nabla u(x)|/2^{k-1} \right)\,dx .
\end{eqnarray*} 
It follows from
\begin{equation*}
0\leq \Psi(2^{k+1})- \Psi(2^{k}) \leq \Psi(2^{k+1}),
\end{equation*}
and inequalities (\ref{EP}) and (\ref{EP2}) that
\begin{eqnarray*} 
&& \sum_{k=-\infty}^{\infty} \int_{{E}_k} \Phi \left(|\nabla u(x)|/2^{k-1} \right) dx (\Psi(2^{k+1})- \Psi(2^{k})) \\
&\leq& \sum_{k=-\infty}^{\infty} \int_{{E}_k} \Phi \left(|\nabla u(x)|/2^{k-1} \right) \Psi(2^{k+1}) dx \\
&=& \sum_{k=-\infty}^{\infty} \int_{{E}_k} f(|\nabla u(x)| /2^{k-1})\cdot \varphi(|\nabla u(x)|/2^{k-1}) \cdot f(2^{k+1}) \cdot \psi(2^{k+1}) dx \\
&\leq& C \sum_{k=-\infty}^{\infty} \int_{{E}_k} f(|\nabla u(x)| \cdot 2^{-k+1}\cdot 2^{k+1}) \varphi(|\nabla u(x)| \cdot 2^{-k+1} \cdot 2^{k+1})\,dx .  
\end{eqnarray*}
Since functions $f$ and $\varphi$ satisfy  the doubling condition,  the previous estimates yield
\begin{eqnarray*}
\int_{0}^{\infty} C_{\Phi}(\{x: u(x)>t\})d\Psi(t)
&\leq& C \sum_{k=-\infty}^{\infty} \int_{{E}_k} f(4|\nabla u(x)|) \varphi(4|\nabla u(x)|)  dx  \\
&\leq& C \sum_{k=-\infty}^{\infty} \int_{{E}_k} f(|\nabla u(x)|)\varphi(|\nabla u(x)|) dx \\
&=& C \sum_{k=-\infty}^{\infty} \int_{{E}_k} \Phi(|\nabla u(x)|) dx \\
&=& C  \int \Phi(|\nabla u(x)|) dx . 
\end{eqnarray*}
Hence, the proof is complete.
\end{proof}

Now we give general examples of the functions which satisfy the capacitary strong type inequality
of Theorem \ref{Main}. In the first example the main point is the role of function $\psi$.

\begin{example}\label{example:varphi}
If functions $\varphi$ and $f$ are increasing
on $[0,\infty )$ and the function $t\mapsto f(t) \varphi(t)$ is a Young function on $[0,\infty)$,
and the inequality
\begin{equation*}
\varphi(st) \leq C \varphi(s)\varphi(t)
\end{equation*}
holds with some constant $C$
for all $s,t \in(0,\infty)$, 
then Theorem \ref{Main} is valid
with $\psi(t)= (\varphi(1/t))^{-1}$. 
To verify the claim we have to check the validity of the inequality
\begin{equation*}
\varphi(s) \cdot\varphi\left(\frac{1}{t}\right)^{-1} \leq C \varphi(st)
\end{equation*}
for all $s,t \in(0,\infty)$.
But the inequality holds, since
$\varphi(st) \leq C \varphi(s)\varphi(t)$ for all $s,t \in(0,\infty)$ and
we may calculate
\begin{eqnarray*}
\varphi(s) \cdot\varphi\left(\frac{1}{t}\right)^{-1} &=& \varphi\left(st\cdot \frac{1}{t}\right) \varphi\left(\frac{1}{t}\right)^{-1} \leq C \varphi(st) \cdot \varphi\left(\frac{1}{t}\right) \varphi\left(\frac{1}{t}\right)^{-1} \\
                                                            &=& C \varphi(st).
\end{eqnarray*}
\end{example}

The next result shows that our capacitary strong-type inequality is valid 
for a large class of Young functions $\Phi$, namely the ones which
satisfy the ${\Delta}_{2}^{+}$-condition.
\begin{corollary} \label{Savu}
Let $1<p<\infty$ be fixed.
Let the Young function $\Phi(t)=t^{p} \varphi(t)$ satisfy the 
${\Delta}_{2}^{+}$-condition on $[0,\infty)$ and
let $\Psi(t)=t^{p} \psi(t)$ be an increasing $C^1$-function
on $(0,\infty )$
such that with some constant $C$
\begin{equation*}
\varphi(s) \cdot \psi(t) \leq C \varphi(st) \quad \textrm{for all}~s \geq 0~\textrm{and}~t>0.
\end{equation*}
Then,
if a function $u$ belongs to the Orlicz-Sobolev space $W_{0}^{1, \Phi}(B^n(0,R))$, 
there is a constant $K$, independent of function $u$,
such that
\begin{equation*} 
\int_{0}^{\infty} C_{\Phi} (\{x\in B^n(0,R): |u(x)|>t\})d \Psi(t) \leq K \int \Phi (|\nabla u(x)|) dx.
\end{equation*}
\end{corollary}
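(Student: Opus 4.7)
The plan is to recognize Corollary \ref{Savu} as a direct application of Theorem \ref{Main} via the natural factorization $f(t) = t^{p}$, so that $\Phi(t) = f(t)\varphi(t)$ and $\Psi(t) = f(t)\psi(t)$. Once the three structural hypotheses of Theorem \ref{Main} (doubling of $f$ and $\varphi$, the multiplicative inequality \eqref{EP} for $f$, and the cross inequality \eqref{EP2} for $\varphi,\psi$) are verified, the conclusion follows immediately.

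First I would dispatch the two multiplicative inequalities. Since $f(s)f(t) = s^{p}t^{p} = (st)^{p} = f(st)$, the hypothesis \eqref{EP} holds with constant $1$. The hypothesis \eqref{EP2}, namely $\varphi(s)\psi(t) \leq C\varphi(st)$ for all $s,t \in (0,\infty)$, is literally part of the assumption of the corollary, so nothing has to be done. (The mild distinction that the corollary allows $s=0$ while Theorem \ref{Main} wants $s>0$ is harmless because the $s\in(0,\infty)$ case is the only one used in the proof of Theorem \ref{Main}.)

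Next I would check the doubling of $f$ and $\varphi$. The doubling of $f(t)=t^{p}$ is immediate: $f(2t)=2^{p}f(t)$. For $\varphi$ I would invoke the $\Delta_{2}^{+}$-condition, which furnishes a constant $C<p$ with $t\varphi'(t)/\varphi(t)\leq C$ on $(0,\infty)$. Integrating the pointwise bound $(\log\varphi)'(s)\leq C/s$ from $t$ to $2t$ gives
\begin{equation*}
\log\varphi(2t)-\log\varphi(t) \;=\; \int_{t}^{2t}\frac{\varphi'(s)}{\varphi(s)}\,ds \;\leq\; \int_{t}^{2t}\frac{C}{s}\,ds \;=\; C\log 2,
\end{equation*}
hence $\varphi(2t)\leq 2^{C}\varphi(t)$, which is the doubling condition.

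With all hypotheses verified, Theorem \ref{Main} applies directly to any $u \in W_{0}^{1,\Phi}(B^{n}(0,R))$ and yields the desired capacitary strong type inequality with a constant $K$ independent of $u$. There is essentially no obstacle in this argument: the sole non-bookkeeping step is the one-line integration that upgrades the logarithmic-derivative bound in the $\Delta_{2}^{+}$-condition to the doubling property of $\varphi$.
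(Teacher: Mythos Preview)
Your proposal is correct and matches the paper's approach: the paper states Corollary~\ref{Savu} as an immediate consequence of Theorem~\ref{Main} with the factorization $f(t)=t^{p}$, without spelling out a proof. Your explicit verification of the doubling of $\varphi$ via the logarithmic-derivative bound from the $\Delta_{2}^{+}$-condition is the natural way to fill in the one detail the paper leaves implicit.
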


Now, we give concrete examples of functions $\Phi$ and $\Psi$ which satisfy the conditions of Theorem \ref{Main}, the functions $f$ and $\psi$ being
$f(t)=t^{p}$ with $p\in (1,\infty)$ and $\psi(t)=(\varphi(\frac{1}{t}))^{-1}$:

\begin{example}\label{example:concrete}
Let functions $f$ and $\psi$ be
$f(t)=t^{p}$ with $p\in (1,\infty)$ and $\psi(t)=(\varphi(1/t))^{-1}$ in Theorem \ref{Main}. If we choose the functions $\Phi$ and $\Psi$ in one of the following ways,
then the capacitary inequality of Theorem \ref{Main} holds.
\begin{align*}
& (1)\quad \mbox{Let } \theta \in [0,\infty)\mbox{ and }\Phi(t)=t^{p} (\log(e+t))^{\theta}
\mbox{ and } \Psi(t)= t^{p} (\log(e+\frac{1}{t}))^{-\theta}\,.\\
& (2)\quad \mbox{Let } \theta \in [0,1)\mbox{ and }\Phi(t)=t^{p} \exp {((\log\log(C+t))^{\theta})}  \\
&\quad\quad\mbox{ and } \Psi(t)= t^{p} \exp \big({-(\log\log(C+\frac{1}{t}))^{\theta}}\big)\,  \mbox{ with some constant } C \\
&\quad\quad
\mbox{ which depends on } p \mbox{ and } \theta \mbox{ only}.\\
& (3)\quad \mbox{Let } \theta \in [0,1)\mbox{ and }\Phi(t)=t^{p} \exp {((\log(e+t))^{\theta})}\\ 
&\quad\quad\mbox{ and }\Psi(t)= t^{p} \exp\big({-(\log(e+\frac{1}{t}))^{\theta}}\big)\,.
\end{align*}
We omit the calculations, which are straightforward.
\end{example}

\section{Capacitary averages}
As an application of our capacitary strong type inequality we show that
certain capacitary averages tend to zero almost everywhere in the given capacity sense.

We need the following estimates for capacities of balls.
For the proof we refer the reader to \cite[Lemma 7.3]{M} where
similar estimates with 
the Riesz capacity
$R_{\Phi}$ are considered.
Since
$R_{\Phi} (B^{n}(0, r))\sim C_{\Phi} (B^{n}(0, r))$, $0<r<R$,
the proof for Lemma \ref{C} follows easily.

\begin{lemma} \label{C}
Let $B^{n}(0,R)$ be a fixed ball, $R>0$, $n\geq 2$.
Suppose that a Young function $\Phi(t)=t^{n} \varphi(t),$
$t\geq 0$, 
satisfies the ${\Delta}_{2}^{+}$-condition.
If $r\in(0, R/2)$ and
\begin{displaymath}
\mathcal{F}(r)=\int_{r}^{R}s^{-1}\varphi\left(\frac{1}{s}\right)^{-\frac{1}{n-1}}ds, 
\end{displaymath}
then there is a positive constant $K$,
depending on $n,R$ and $\varphi$ only, such that 
\begin{displaymath}
K^{-1} \mathcal{F}(r)^{1-n}\leq C_{\Phi} (B^{n}(0, r)) 
\leq K \mathcal{F}(r)^{1-n}\,.
\end{displaymath}
\end{lemma}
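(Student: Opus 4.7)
The plan is to reduce the claim to the analogous estimate for the Riesz capacity $R_{\Phi}$, which is already available in the literature. Since the closed ball $\overline{B^n(0,r)}$ is a compact subset of $B^n(0,R)$ when $r<R/2$, Lemma~\ref{capacity_Riesz} gives
\begin{equation*}
C_{\Phi}(B^n(0,r))\sim R_{\Phi}(B^n(0,r)),
\end{equation*}
with constants depending only on $n$ and $\varphi$ (through the $\Delta_2^+$ constants). It therefore suffices to establish the two-sided bound $R_{\Phi}(B^n(0,r))\sim \mathcal{F}(r)^{1-n}$, which is exactly \cite[Lemma 7.3]{M}. So the first step is to quote the equivalence from Lemma~\ref{capacity_Riesz}, and the second step is to invoke Maz'ya's estimate.

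To make the reduction self-contained I would also indicate the flavor of the underlying Riesz estimate, which is a radial variational problem. For the upper bound one tests with a radial density $f(y)=h(|y|)\chi_{[r,R]}(|y|)$; writing $I_1\ast f(0)=c_n\int_0^\infty h(s)\,ds$ forces $\int_r^R h(s)\,ds\gtrsim 1$, and one then minimises
\begin{equation*}
\int_r^R \Phi(h(s))\, s^{n-1}\,ds.
\end{equation*}
The Euler--Lagrange relation $\Phi'(h(s))\,s^{n-1}=\lambda$, combined with $\Phi(t)=t^n\varphi(t)$ and the slow variation of $\varphi$ under the $\Delta_2^+$-condition, gives the optimal profile $h(s)\sim s^{-1}\varphi(1/s)^{-1/(n-1)}$ and hence $C_{\Phi}(B^n(0,r))\lesssim \mathcal{F}(r)^{1-n}$. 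For the lower bound I would use the standard symmetric decreasing rearrangement of any admissible $f$ (which decreases $\int\Phi(f)$ but preserves the pointwise condition $I_1\ast f\ge 1$ on the ball), reducing to the same one-dimensional problem and obtaining the matching bound.

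The key technical ingredient, and the main obstacle, is justifying that the one-dimensional variational problem really has $h(s)\sim s^{-1}\varphi(1/s)^{-1/(n-1)}$ as its (asymptotic) minimizer. This depends on $\varphi$ being slowly varying enough that $\varphi(h(s))\sim \varphi(1/s)$, which is exactly what the ${\Delta}_{2}^{+}$-condition and the hypothesis $\varphi(t^2)\sim\varphi(t)$ deliver; once this is in place the balancing argument and the calibration of the Lagrange multiplier $\lambda\sim \mathcal{F}(r)^{-(n-1)}$ produce both inequalities simultaneously. Since all of this is already carried out in \cite[Lemma 7.3]{M}, my actual write-up would be very short: state the two displays above, cite Lemma~\ref{capacity_Riesz} and \cite[Lemma 7.3]{M}, and absorb all constants into the single constant $K$.
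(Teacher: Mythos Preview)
Your approach is exactly the paper's: the paper does not give an independent argument either, but simply cites \cite[Lemma~7.3]{M} for the Riesz-capacity estimate $R_{\Phi}(B^n(0,r))\sim \mathcal{F}(r)^{1-n}$ and then invokes the equivalence $R_{\Phi}\sim C_{\Phi}$ from Lemma~\ref{capacity_Riesz}. One small slip to fix in your write-up: the reference \cite{M} is Mizuta's paper, not Maz'ya's.
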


We obtain the next theorem on capacitary averages by using our capacitary strong type inequality.
The proof is similar to the proof of \cite[Lemma 6.1]{AHS2}.

\begin{theorem}\label{Application}
Let $n\geq 2$.
Let the Young function
$\Phi(t)=t^{n}\varphi(t),$ $t\geq 0$, 
satisfy the ${\Delta}_{2}^{+}$-condition. 
Let $\Psi(t)=t^{n} \varphi(\frac{1}{t})^{-1}$, $t>0$, be an increasing $C^1$-function.
If
\begin{equation*}
{E}_{t}(r)=B^{n}({x}_{0},r)\cap \{x:|u(x)-u({x}_{0}|>t\},
\end{equation*}
where $0<r<R$ and $u\in W_{0}^{1,\Phi}(B^n(0,R))$,
then
\begin{equation*}
\lim_{r\to 0} \frac{1}{{C}_{\Phi} (B^{n}({x}_{0},r))} \int_{0}^{\infty} {C}_{\Phi}({E}_{t}(r)) d\Psi(t)=0
\end{equation*}
for almost every $x_0\in\Rn$ in the ${C}_{\Phi}$-capacity sense.
\end{theorem}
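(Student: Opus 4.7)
The plan is to follow the strategy of \cite[Lemma 6.1]{AHS2}, using Theorem \ref{Main} as the engine in place of the $L^n(\log L)^\alpha$ capacitary strong type inequality employed there. I would first work with a $C_\Phi$-quasi-continuous representative of $u$, so that $u(x_0)$ is well defined as a fine limit for every $x_0$ outside a $C_\Phi$-null exceptional set. For $r\in(0,R/2)$ fix a cutoff $\eta\in C_0^\infty(B^n(x_0,2r))$ with $\eta\equiv 1$ on $B^n(x_0,r)$ and $|\nabla\eta|\le c/r$, and set $v=\eta(u-u(x_0))\in W_0^{1,\Phi}(B^n(0,R))$. Since $E_t(r)\subset\{x:|v(x)|>t\}$, applying Theorem \ref{Main} to $v$ yields
\begin{equation*}
\int_0^\infty C_\Phi(E_t(r))\,d\Psi(t)\le K\int \Phi(|\nabla v(x)|)\,dx.
\end{equation*}

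Next I would decompose $|\nabla v|\le|\nabla u|+c|u-u(x_0)|/r$ on $B^n(x_0,2r)$ and use the doubling of $\Phi$ to obtain
\begin{equation*}
\int \Phi(|\nabla v|)\,dx\le C\int_{B^n(x_0,2r)}\Phi(|\nabla u(x)|)\,dx+C\int_{B^n(x_0,2r)}\Phi(|u(x)-u(x_0)|/r)\,dx.
\end{equation*}
Invoking an Orlicz-Poincar\'e inequality centred at the mean $u_{B^n(x_0,2r)}$, together with the convergence $u_{B^n(x_0,2r)}\to u(x_0)$ as $r\to 0$ for $C_\Phi$-q.e. $x_0$ (a standard consequence of quasi-continuity for $\Phi$ satisfying the $\Delta_2^+$-condition), the second summand is absorbed into a constant multiple of the first, plus a correction of size $o(1)\cdot C_\Phi(B^n(x_0,r))$ as $r\to 0$.

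After dividing by $C_\Phi(B^n(x_0,r))$, the theorem reduces to the capacitary Lebesgue differentiation statement
\begin{equation*}
\lim_{r\to 0}\frac{1}{C_\Phi(B^n(x_0,r))}\int_{B^n(x_0,2r)}\Phi(|\nabla u(x)|)\,dx=0\quad\mbox{for } C_\Phi\mbox{-q.e. } x_0,
\end{equation*}
and this is the main obstacle. I would attack it by fixing $\lambda>0$, setting $A_\lambda:=\{x_0:\limsup_{r\to 0}\mu(B^n(x_0,2r))/C_\Phi(B^n(x_0,r))>\lambda\}$ with $d\mu=\Phi(|\nabla u|)\,dx$, and showing $C_\Phi(A_\lambda)=0$ via a Vitali-type covering argument adapted to the capacity. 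The quantitative estimate $C_\Phi(B^n(x_0,r))\sim\mathcal{F}(r)^{1-n}$ from Lemma \ref{C} will control the overlaps, while Theorem \ref{Main} applied to a suitable truncation of $u$ supplies summability of the capacities over the covering, yielding $\mu(\bigcup B_i)\le\lambda\sum C_\Phi(B_i)$ with the right-hand side arbitrarily small. The genuine technical delicacy is synchronising the Orlicz-Poincar\'e constants, the quasi-continuity redefinition of $u(x_0)$, and the capacity-of-balls estimate of Lemma \ref{C}, so that the absorption in the previous paragraph is lossless on scales $r\to 0$ and the differentiation argument proceeds uniformly in $\Phi$.
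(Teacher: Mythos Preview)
Your route and the paper's diverge at the first step. The paper does \emph{not} localize with a cutoff and invoke Poincar\'e; instead it works through the Riesz-potential representation. One writes $u$ essentially as $I_1*f$, defines the maximal operator
\[
\mathcal{M}(I_1*f)(x_0)=\sup_{r>0}\frac{1}{C_\Phi(B^n(x_0,r))}\int_0^\infty C_\Phi(E_t(r))\,d\Psi(t),
\]
splits $f=f_1+f_2$ with $f_1=f\chi_{B^n(x_0,2r)}$, feeds $I_1*f_1$ into Theorem~\ref{Main} together with the Riesz-transform bound of Lemma~\ref{R}, and controls the far piece pointwise by $(I_1*f)(x_0)$. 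This gives
\[
\mathcal{M}(I_1*f)(x_0)\le C\sup_{r>0}\frac{1}{C_\Phi(B^n(x_0,r))}\int_{B^n(x_0,r)}\Phi(f)\,dx+C(I_1*f)(x_0),
\]
after which a Vitali covering yields the weak-type inequality and density of $C_0^\infty$ in $W^{1,\Phi}$ finishes. No Poincar\'e inequality and no comparison of $u_B$ with $u(x_0)$ is ever needed.

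Your approach has a genuine gap precisely at the ``absorption'' step. After Poincar\'e you are left with the residual
\[
\int_{B^n(x_0,2r)}\Phi\!\bigl(|u_{B^n(x_0,2r)}-u(x_0)|/r\bigr)\,dx\;\sim\;r^n\,\Phi\!\bigl(|u_{B^n(x_0,2r)}-u(x_0)|/r\bigr),
\]
and you claim this is $o(1)\cdot C_\Phi(B^n(x_0,r))$. But knowing only $u_{B^n(x_0,2r)}\to u(x_0)$ for $C_\Phi$-q.e.\ $x_0$ gives no control on the ratio $|u_{B^n(x_0,2r)}-u(x_0)|/r$ inside $\Phi$; already for $\Phi(t)=t^n$ one would need $|u_{B}-u(x_0)|^n(\log(R/r))^{n-1}\to 0$, which is a quantitative rate not supplied by quasi-continuity alone. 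Closing this step seems to require exactly the weak-type/density mechanism that the paper deploys directly on the original problem, so your reduction does not buy a simplification. The Vitali argument you sketch for the capacitary differentiation of $\Phi(|\nabla u|)\,dx$ is essentially the paper's covering estimate for the set $A_t$, so the two proofs do share that final ingredient.
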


\begin{proof}
Let us write $F(x)=|u(x)-u(x_{0})|$ and define the maximal operator
\begin{equation*}
\mathcal{M}(F)(x_{0})=\sup_{r>0} \frac{1}{C_{\Phi}(B^{n}(x_{0},r))} \int_{0}^{\infty} C_{\Phi}(E_{t}(r)) d\Psi(t)\,. 
\end{equation*}
In order to show the maximal operator satisfies the weak-type inequality
\begin{equation} \label{WTI}
C_{\Phi}(x\in B^{n}(0,R) : \mathcal{M}(I_{1}*F)(x)>t\}) \leq \frac{C}{\Phi(t)} \int \Phi(F(x))dx
\end{equation}
we divide the function $F$ into two parts:
$F(x)=(F_{1}+F_{2})(x)$ where  $F_{1}(x)=F(x)\chi_{B^{n}(x_{0},2r)}(x)$. The function $\chi_{A}$ is the characteristic function
of a set $A$.
By the capacitary strong-type inequality Theorem \ref{Main} and by Lemma \ref{R} 
there is a constant $C$ such that
\begin{align*}
&\int_{0}^{\infty} C_{\Phi}(\{x\in B^{n}(x_{0},r) :(I_{1}*F_{1})(x)>t\})d\Psi(t) \\
&\leq C \int \Phi(|\nabla I_{1}*F_{1}(x)|)dx \leq C \int \Phi(F_{1}(x))dx \\
&\leq C \int\limits_{B^{n}(x_{0},2r)} \Phi(F(x))dx\,.
\end{align*}
On the other hand, $(I_{1}*F_{2})(x)\leq(I_{1}*F)(x_{0})$ whenever $x\in B^{n}(x_{0},r)$. Thus, by Lemma \ref{C}

\begin{equation} \label{Max}
\mathcal{M}(I_{1}*F)(x_{0})\leq C \sup_{r>0} \frac{1}{C_{\Phi}(B^{n}(x_{0},r))} \int\limits_{B^{n}(x_{0},r)} \Phi(F(x))dx + C(I_{1}*F)(x_{0}).
\end{equation}
Let us write
\begin{equation*}
A_{t}=\biggl\{x : \sup_{r>0} \frac{1}{C_{\Phi}(B^{n}(x,r))} \int\limits_{B^{n}(x,r)} \Phi(F(y))dy>\Phi(t) \biggr\}.
\end{equation*}
For each $x\in A_{t}$ there is $B_{x}$, a ball with center $x$, so that
\begin{equation} \label{Bx}
\int_{B_{x}}\Phi(F(y))dy>\Phi(t)C_{\Phi}(B_{x}).
\end{equation}
By a covering argument, \cite[1.6]{S}, there exists a family of disjoint balls ${B}_{j}$ such that
$A_{t}\subset \bigcup_{j=1}^{\infty} 5B_{j}$. By inequality (\ref{Bx})
\begin{align} \label{CD}
&C_{\Phi}(A_{t}) \leq C \sum_{j=1}^{\infty} C_{\Phi}(B_{j})\notag \\
&\leq \frac{C}{\Phi(t)}\sum_{j=1}^{\infty}\int_{B_{j}} \Phi(F(x))dx \nonumber \\
&\leq \frac{C}{\Phi(t)}\int_{\Rn} \Phi(F(x))dx.
\end{align}
Hence, inequality (\ref{Max}), estimate (\ref{CD}) and the definition of $C_{\Phi}$ imply the weak-type inequality (\ref{WTI}). 
Since 
the function $\Phi$ satisfies the $\Delta _2$-condition,
$C_{0}^{\infty}(\Rn)$ is dense in $W^{1,\Phi}(\Rn)$=$W^{1,\Phi}_0(\Rn)$
by \cite[Theorem 2.1]{DT}. Hence the weak-type inequality gives the claim.
\end{proof}

\end{document}